\newtheorem{thm}{Theorem}[section]
\newtheorem{prop}[thm]{Proposition}
\newtheorem{lem}[thm]{Lemma}
\theoremstyle{definition}
\newtheorem{dfn}[thm]{Definition}
\newtheorem{ex}[thm]{Example}
\theoremstyle{remark}
\newtheorem{rem}[thm]{Remark}
\numberwithin{equation}{section}
\begin{document}

\title{$F$-signature of graded Gorenstein rings}

\author{Akiyoshi Sannai}
\author{Kei-ichi Watanabe}
\address{Graduate School of Mathematical Sciences, The University of Tokyo 
3-8-1 Komaba, Meguro, Tokyo, 153-8914 Japan}

\email[Akiyoshi Sannai]{sannai@ms.u-tokyo.ac.jp}

\address{Department of Mathematics,College of Humanities and Sciences,Nihon University,
Setagaya-ku,Tokyo, 156-0045 Japan}

\email[Kei-ichi Watanabe]{watanabe@math.chs.nihon-u.ac.jp}

\begin{abstract}
For a commutative ring $R$, the $F$-signature was defined 
 by Huneke and Leuschke \cite{H-L}. It is an invariant that measures the order of the rank of the free direct summand of $R^{(e)}$. Here, $R^{(e)}$ is $R$ itself, regarded as an $R$-module through $e$-times Frobenius action $F^e$.

In this paper, we show a connection 
of the F-signature of a graded ring 
with other invariants. More precisely, for a graded $F$-finite Gorenstein ring $R$ of dimension $d$, we give an inequality among the $F$-signature $s(R)$, $a$-invariant $a(R)$ and Poincar\'{e} polynomial $P(R,t)$.
\[ s(R)\le\frac{(-a(R))^d}{2^{d-1}d!}\lim_{t\rightarrow 1}(1-t)^dP(R,t) \]

Moreover, we show that $R^{(e)}$ has only one free direct summand for any $e$, if and only if $R$ is $F$-pure and $a(R)=0$. This gives a characterization of such rings.
\end{abstract}

\maketitle

\section{Introduction}

Let $R$ be a reduced local ring of dimension $d$ containing a field of characteristic $p>0$ with a perfect residue field. Let $R^{(e)}$ be $R$ itself, regarded as an $R$-module through the $e$-times composition of Frobenius maps $F^e$. In the following, we often assume that $R^{(e)}$ is $F$-finite. This is equivalent to $R^{(1)}$ being a finite $R$-module. The structure of $R^{(e)}$ has a close relationship with the singularity, multiplicity and Hilbert-Kunz multiplicity of $R$. For example, if $R$ is a regular local ring of dimension $d$, then by 
a theorem of Kunz, $R^{(e)}$ becomes a free module of rank $p^{ed}$. The converse also holds, and thus, this property characterizes regular local rings.

In \cite{H-L},Huneke and Leuschke investigated how many free direct summands are contained in $R^{(e)}$ . Namely, they decomposed $R^{(e)}$ as $R^{(e)}=R^{a_q}\oplus M_q$ and examined $a_q$, where $M_q$ is an $R$-module that does not contain any free direct summand. 
The number $a_q$ is known to be independent of the decomposition. 
In this article, first we consider the case where $a_q=1$ for any $q=p^e$. 

We obtained the following result.
\begin{thm}
For any reduced graded $F$-finite Gorenstein ring $R$ such that $R$ has an isolated singularity and $R_0$ is a perfect field, the following are equivalent.
\begin{enumerate}
\item $R^{(e)}$ has only one free direct summand, for each positive integer $e$.
\item $R$ is $F$-pure and $a(R)=0$.
\end{enumerate}
\end{thm}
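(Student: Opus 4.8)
The plan is to convert the existence of free direct summands of $R^{(e)}$ into a concrete graded condition via Grothendieck duality, and then to read off the free rank $a_q$ (with $q=p^e$) from degree considerations governed by $a(R)$. Write $R^{(e)}=F^e_*R$ and denote by $F^e_*u$ the element $u\in R$ viewed in $R^{(e)}$; give $R^{(e)}$ the $\tfrac1q\mathbb{Z}$-grading in which $\deg(F^e_*u)=\tfrac1q\deg u$, so that $r\cdot F^e_*u=F^e_*(r^qu)$ is homogeneous. Since $R$ is Gorenstein, $\omega_R\cong R(a(R))$, and since $F^e_*R$ is maximal Cohen--Macaulay one has $\operatorname{Hom}_R(F^e_*R,\omega_R)\cong F^e_*\omega_R$. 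Substituting $\omega_R\cong R(a(R))$ and tracking the shift gives a graded isomorphism $\operatorname{Hom}_R(R^{(e)},R)\cong R^{(e)}\bigl(-\tfrac{(q-1)a(R)}{q}\bigr)$; in particular this Hom is free of rank one over $R^{(e)}$, generated by a homogeneous map $\Phi$ of degree $\tfrac{(q-1)a(R)}{q}$, and every $R$-linear $\phi\colon R^{(e)}\to R$ has the form $\phi(x)=\Phi(F^e_*c\cdot x)$ for a unique $c\in R$. A graded free summand generated in degree $m/q$ is exactly a pair $(v,\phi)$ with $v=F^e_*u$, $u\in R_m$, and $\phi(v)=1$; tracking degrees forces $\phi$ to correspond to $c\in R_{-m-(q-1)a(R)}$, with splitting condition $\Phi(F^e_*(cu))=1$, so a free summand requires $m\ge0$ and $-m-(q-1)a(R)\ge0$. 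I will use the standard fact that $a_q\ge1$ iff $R$ is $F$-pure, which here says there is a homogeneous $w$ of degree $-(q-1)a(R)$ with $\Phi(F^e_*w)=1$, and that the Huneke--Leuschke free rank equals the graded free rank over the graded local ring $R$. Finally, if $R$ is $F$-pure then the natural Frobenius on $H^d_{\mathfrak m}(R)$ is injective and multiplies degrees by $p$; as $R$ is Gorenstein its one-dimensional socle sits in the top degree $a(R)$, so injectivity forces $p\,a(R)\le a(R)$, i.e. $a(R)\le0$.

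For (2)$\Rightarrow$(1), assume $R$ is $F$-pure and $a(R)=0$. Then $a_q\ge1$. With $a(R)=0$ the two inequalities above read $m\ge0$ and $-m\ge0$, so $m=0$: every free summand is generated in degree $0$. Hence the free part sits inside $R^{(e)}$ as a graded summand of shifts all equal to $0$, and $a_q=\dim_k(R^{a_q})_0\le\dim_k(R^{(e)})_0$. But $(R^{(e)})_0=F^e_*(R_0)=F^e_*k\cong k$ is one-dimensional over the perfect field $k$, so $a_q\le1$, and therefore $a_q=1$ for every $e$.

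For (1)$\Rightarrow$(2), assume $a_q=1$ for all $e$. In particular $a_1\ge1$, so $R$ is $F$-pure, and by the above $a(R)\le0$. Suppose $a(R)<0$; I will contradict $a_q=1$ by producing two independent free summands. Fix any $e$. By $F$-purity there is a homogeneous $w\in R_{-(q-1)a(R)}$ with $\Phi(F^e_*w)=1$, and $\deg w=(q-1)(-a(R))>0$, so $w\in\mathfrak m$. Writing $w=\sum_g g\,r_g$ with $g$ ranging over homogeneous generators of $R_+$ (so $\deg g\ge1$, $\deg r_g\ge0$), the identity $1=\Phi(F^e_*w)=\sum_g\Phi(F^e_*(g\,r_g))$ forces $\Phi(F^e_*(g_0 r_{g_0}))=\lambda\ne0$ for some $g_0$ of positive degree $m:=\deg g_0$. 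Then $u=g_0$, $c=\lambda^{-1}r_{g_0}$ give a free summand generated in degree $m/q$ via $v_1=F^e_*u$ and $\phi_1(x)=\Phi(F^e_*c\cdot x)$, with $\phi_1(v_1)=1$. Together with the degree-$0$ summand $v_0=F^e_*1$, $\phi_0(x)=\Phi(F^e_*w\cdot x)$, the cross terms are controlled by degree: $\phi_1(v_0)=\Phi(F^e_*c)$ lies in degree $-m/q<0$, hence vanishes, while $\phi_0(v_1)$ lies in non-negative degree. Thus $\bigl(\phi_i(v_j)\bigr)=\left(\begin{smallmatrix}1&\ast\\0&1\end{smallmatrix}\right)$ is invertible, so $v_0,v_1$ span a rank-two free summand and $a_q\ge2$, a contradiction. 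Hence $a(R)=0$, completing (1)$\Rightarrow$(2).

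The main obstacle I anticipate is the first paragraph: pinning down the graded duality isomorphism with the correct shift, verifying that the generator $\Phi$ of $\operatorname{Hom}_R(R^{(e)},R)$ is homogeneous of the stated degree, and identifying Huneke--Leuschke's ungraded $a_q$ with the graded free rank. Once this dictionary is in place the argument reduces to the elementary degree bookkeeping above; the only other delicate point is the factorization $w=\sum_g g\,r_g$ through positive-degree generators in (1)$\Rightarrow$(2), which is exactly where the assumption $a(R)<0$ (forcing $\deg w>0$) is used.
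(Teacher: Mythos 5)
Your proof is correct, but the harder direction (1)$\Rightarrow$(2) is argued quite differently from the paper. For (2)$\Rightarrow$(1) you and the paper do essentially the same thing: the paper's Lemma 5.1 gives the bound $\deg\alpha\le -a(q-1)$ for a generator of a free summand, which with $a=0$ forces all generators into the one-dimensional piece $(R^{(e)})_0\cong k$; your duality dictionary ($\operatorname{Hom}_R(R^{(e)},R)\cong R^{(e)}(-\tfrac{(q-1)a}{q})$, correctly shifted) reproves exactly that bound, together with the lower bound $\deg\alpha\ge 0$. For (1)$\Rightarrow$(2), the paper deduces $a(R)\le 0$ from $F$-purity and then excludes $a(R)<0$ by a non-elementary detour: its Proposition 2.3 (isolated singularity $+$ $F$-injective $+$ $a<0$ implies $F$-rational) combined with the Huneke--Leuschke theorem that $F$-rational Gorenstein rings have $s(R)>0$, so $a_q$ would grow like $q^d$ rather than stay equal to $1$. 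You instead exclude $a(R)<0$ by hand: a splitting of Frobenius is given by some homogeneous $w$ of degree $(q-1)(-a)>0$, hence $w\in\mathfrak m$, and factoring $w$ through positive-degree generators produces a second free summand generated in positive degree, independent of the tautological degree-zero one because the cross term $\Phi(F^e_*c)$ lands in negative degree and must vanish; this gives $a_q\ge 2$ directly. Your route is more elementary (no tight closure, no $F$-signature positivity) and, notably, never uses the isolated-singularity hypothesis, so it proves a slightly stronger statement; its only overhead is the graded duality isomorphism with the correct shift and the identification of the Huneke--Leuschke free rank with the graded free rank, both standard facts that the paper itself uses implicitly (Lemma 5.1 and the proof of Theorem 5.4 already assume the generators $\alpha_i$ are homogeneous). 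The minor points you flag are genuine but routine, and your degree bookkeeping checks out, including the claim that $\Phi(F^e_*c)$ has degree $-m/q<0$.
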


\begin{dfn}
Decompose $R^{(e)}$ as $R^{(e)}=R^{a_q}\oplus M_q$, where $M_q$ is an $R$-module that does not contain $R$ as a direct summand. If the limit exists, we set
\[ s(R)=\lim_{q\rightarrow \infty}\frac{a_q}{q^d}, \]
and call it $F$-{\it signature}.
\end{dfn}

This invariant was introduced by Huneke and Leuschke, and has been investigated by several researchers. In their paper \cite{H-L}, the following theorems were shown.

\begin{thm}\label{Thm1.3}
Let $R$ be an $F$-finite reduced Cohen-Macaulay local ring with a perfect residue field and whose $F$-signature $s(R)$ exists. Then the following are equivalent.
\begin{enumerate}
\item $s(R)=1$.
\item $R$ is a regular local ring.
\end{enumerate}
\end{thm}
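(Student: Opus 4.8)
The plan is to prove the two implications separately; the implication $(2)\Rightarrow(1)$ is immediate from Kunz's theorem as recalled in the introduction, while $(1)\Rightarrow(2)$ is the substantive direction.

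For $(2)\Rightarrow(1)$: if $R$ is an $F$-finite regular local ring of dimension $d$, then by the theorem of Kunz quoted above, $R^{(e)}=F^e_*R$ is a free $R$-module, necessarily of rank $q^d$ where $q=p^e$. Hence in the decomposition $R^{(e)}=R^{a_q}\oplus M_q$ one has $a_q=q^d$ and $M_q=0$, so $s(R)=\lim_{q\to\infty}a_q/q^d=1$.

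For $(1)\Rightarrow(2)$ I would first record the general bound $s(R)\le 1$. Since $R$ is reduced, $R^{(e)}$ is a torsion-free $R$-module whose rank over the total quotient ring equals $q^d$ (the residue field being perfect). Rank is additive on $R^{a_q}\oplus M_q$ and $M_q$ has non-negative rank, so $a_q\le q^d$; dividing by $q^d$ and passing to the limit gives $s(R)\le 1$, with deficit $\operatorname{rank}_R(M_q)=q^d-a_q$. Thus $s(R)=1$ is equivalent to $\operatorname{rank}_R(M_q)=o(q^d)$, and the goal becomes: non-regularity of $R$ must force $\operatorname{rank}_R(M_q)$ to grow like $q^d$. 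To organize this I would use the description of the maximal free rank as a colength, via an $\mathfrak{m}$-primary splitting ideal $I_e=\{r\in R:\varphi(F^e_* r)\in\mathfrak{m}\text{ for all }\varphi\in\operatorname{Hom}_R(R^{(e)},R)\}$ with $a_q=\lambda_R(R/I_e)$, so that $s(R)=\lim_q\lambda_R(R/I_e)/q^d$. One always has $\mathfrak{m}^{[q]}\subseteq I_e$, which recovers $a_q\le\lambda_R(R/\mathfrak{m}^{[q]})$ and ties the picture to the Hilbert--Kunz multiplicity $e_{\mathrm{HK}}(R)$. Passing to the completion (again reduced, since $F$-finite rings are excellent) and arguing through the total quotient ring, a free summand of rank exactly $q^d$ would force $M_q$, being torsion-free of rank $0$, to vanish; then $R^{(e)}$ is free and $R$ is regular by the converse to Kunz's theorem.

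The hard part is to upgrade the exact statement ``$a_q=q^d\Rightarrow R$ regular'' to the asymptotic one needed here, namely to produce for non-regular $R$ a constant $c>0$ with $\operatorname{rank}_R(M_q)\ge c\,q^d$ for all large $q$; \emph{a priori} $s(R)=1$ only yields $\operatorname{rank}_R(M_q)=o(q^d)$, not $M_q=0$. I expect this to be the main obstacle and would attack it by localizing at the singular locus: at a prime $\mathfrak{p}$ where $R_\mathfrak{p}$ fails to be regular the local splitting rank is deficient, and additivity of rank should propagate a linear-in-$q^d$ deficit back to $R^{(e)}$. An alternative is to bound the deficit below by the excess Hilbert--Kunz multiplicity, aiming for an inequality of the shape $q^d-a_q\ge c\,(e_{\mathrm{HK}}(R)-1)\,q^d+O(q^{d-1})$ and then invoking the Watanabe--Yoshida theorem that $e_{\mathrm{HK}}(R)=1$ precisely when $R$ is regular. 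The genuine difficulty is that this last route controls the number of generators $\mu_R(M_q)$ rather than $\operatorname{rank}_R(M_q)$ directly, so the crux is to bridge from the generator count to the rank of the non-free part.
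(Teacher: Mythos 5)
The paper offers no proof of this theorem at all --- it is quoted as background from Huneke--Leuschke, with the substantive direction referred to \cite[Corollary 16]{H-L} --- so the comparison here is between your sketch and the argument you would need to supply. Your direction $(2)\Rightarrow(1)$ is fine, and your reduction of $(1)\Rightarrow(2)$ to a lower bound on $\operatorname{rank}_R(M_q)$ is the right framing, but you stop exactly where the actual work happens: you never bridge from the number of generators of $M_q$ to its rank, and you explicitly flag this as an unresolved ``crux.'' That bridge is the content of the Huneke--Leuschke argument, and it is where the Cohen--Macaulay hypothesis --- which your sketch never uses --- enters. Since $R$ is Cohen--Macaulay, $R^{(e)}$ is a maximal Cohen--Macaulay module, hence so is its direct summand $M_q$, and for an MCM module with well-defined rank one has $\mu_R(M_q)\le e(M_q)=e(R)\operatorname{rank}_R(M_q)$, where $e$ denotes multiplicity with respect to $\mathfrak{m}$ (compare lengths modulo a minimal reduction versus modulo $\mathfrak{m}$, and use additivity of multiplicity on torsion-free modules). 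Combining this with $\mu_R(R^{(e)})=\lambda_R\bigl(R^{(e)}/\mathfrak{m}R^{(e)}\bigr)=\lambda_R\bigl(R/\mathfrak{m}^{[q]}\bigr)=e_{HK}(R)q^d+O(q^{d-1})$ (perfect residue field) and $a_q+\operatorname{rank}_R(M_q)=q^d$ gives
\[ e_{HK}(R)q^d+O(q^{d-1})=a_q+\mu_R(M_q)\le a_q+e(R)\bigl(q^d-a_q\bigr), \]
and dividing by $q^d$ and letting $q\to\infty$ yields $e_{HK}(R)\le s(R)+e(R)\bigl(1-s(R)\bigr)$. If $s(R)=1$ this forces $e_{HK}(R)=1$, and the Watanabe--Yoshida theorem (applicable because a Cohen--Macaulay local ring is unmixed) then gives regularity.

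Neither of your two proposed escape routes closes this gap as stated. Localizing at a non-regular prime $\mathfrak{p}$ produces a splitting deficit of order $q^{\dim R_\mathfrak{p}}$ in $R_\mathfrak{p}^{(e)}$, and propagating that to a deficit of order $q^d$ in $R^{(e)}$ is not a formal consequence of additivity of rank. The Hilbert--Kunz route is exactly the correct one, but it is not an ``alternative'' to the generators-versus-rank problem --- it \emph{is} that problem, resolved by the inequality $\mu\le e(R)\cdot\operatorname{rank}$ for MCM modules. So the proposal is a correct skeleton with the single load-bearing step absent.
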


\begin{thm}
Let $R$ be an $F$-finite reduced complete Gorenstein local ring with a perfect residue field. Then the following are equivalent.
\begin{enumerate}
\item $s(R)>0$.
\item $R$ is $F$-rational.
\end{enumerate}
\end{thm}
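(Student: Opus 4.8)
The plan is to convert the $F$-signature into a sequence of colengths and to read both conditions off those colengths. Since $R$ is complete, Krull--Schmidt holds, so the free rank $a_q$ of $R^{(e)}=F^e_*R$ (where $q=p^e$) is well defined. Because $R$ is Gorenstein and $F$-finite, Grothendieck duality identifies $\mathrm{Hom}_R(R^{(e)},R)$ with a free $R^{(e)}$-module of rank one; I fix a generator $\Phi_e$ (a ``trace''). Every $R$-linear map $R^{(e)}\to R$ is then of the form $F^e_*x\mapsto \Phi_e(F^e_*(dx))$ for a unique $d\in R$, and such a map is surjective exactly when $d\notin I_e$, where
\[ I_e=\{\,d\in R : \Phi_e(F^e_*(dR))\subseteq \mathfrak m\,\}. \]
One checks that $I_e$ is an ideal containing $\mathfrak m^{[q]}$. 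The key lemma I would establish (the Gorenstein case of the Huneke--Leuschke computation) is the equality $a_q=\ell_R(R/I_e)$, where $\ell_R$ denotes length. As a sanity check, in the regular case Kunz's theorem gives $R^{(e)}$ free of rank $q^d$, and indeed $I_e=\mathfrak m^{[q]}$ with $\ell_R(R/\mathfrak m^{[q]})=q^d$. In particular $s(R)=\lim_{q\to\infty}\ell_R(R/I_e)/q^d$.

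For $(1)\Rightarrow(2)$ I would argue by contraposition, and here only the inequality $a_q\le \ell_R(R/I_e)$ is needed. Suppose $R$ is not $F$-rational. Since $R$ is Gorenstein, $F$-rationality is classically equivalent to strong $F$-regularity, so $R$ is not strongly $F$-regular: there is a nonzerodivisor $c$ (a test element) such that the map $R\to R^{(e)}$, $1\mapsto F^e_*c$, splits for no $e$. Unwinding the description above, this says precisely that $c\in I_e$ for every $e$, and since $I_e$ is an ideal we get $(c)+\mathfrak m^{[q]}\subseteq I_e$. Therefore
\[ a_q=\ell_R(R/I_e)\le \ell_R\!\left(R/(c,\mathfrak m^{[q]})\right). \]
The right-hand side is the Hilbert--Kunz function of $R/cR$, a ring of dimension $d-1$, so it is $O(q^{d-1})=o(q^d)$. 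Hence $s(R)=0$, the contrapositive of $(1)\Rightarrow(2)$.

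The reverse implication $(2)\Rightarrow(1)$ is the substantial one, and I expect it to be the main obstacle. Assuming $R$ is $F$-rational, hence strongly $F$-regular, I must produce a \emph{positive} lower bound $\liminf_q a_q/q^d>0$. The idea is to exploit that strong $F$-regularity is a \emph{uniform} condition: there exist a nonzero $c$ and an $e_0$ together with a splitting $\theta$ of $R\xrightarrow{F^{e_0}_*c}R^{(e_0)}$. Writing $R^{(e_0+e_1)}=F^{e_1}_*R^{(e_0)}$ and propagating $\theta$ through the $e_1$-th Frobenius, one shows that a fixed positive proportion of the $q^d$ rank-one pieces of $R^{(e)}$ split off as free summands, giving $a_q\ge \delta\,q^d$ for some $\delta=\delta(c,e_0)>0$; combined with the existence of the limit $s(R)$ this yields $s(R)>0$. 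The difficulty is precisely in turning the qualitative uniformity of strong $F$-regularity into this quantitative density: the naive colon estimate $a_q\ge \ell_R(R/(\mathfrak m^{[q]}:c))$ is \emph{false}, since it would force $s(R)=e_{\mathrm{HK}}(R)$, so the single splitting must be organized carefully (through the additivity/multiplicativity of the splitting numbers, as in the existence proof for $s(R)$) to extract a correct, if non-sharp, constant $\delta$. By contrast, the direction $(1)\Rightarrow(2)$ is clean, because there a single bad element $c$ inflates $I_e$ enough to collapse the colength to lower order.
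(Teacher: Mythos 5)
The paper offers no argument of its own for this statement---it is quoted from Huneke--Leuschke and proved there, with the paper citing \cite[Theorem 11]{H-L}---so the comparison has to be with their proof. Your setup coincides with theirs and is correct as far as it goes: completeness gives Krull--Schmidt, Gorenstein duality makes $\mathrm{Hom}_R(R^{(e)},R)$ free of rank one over $R^{(e)}$, the free rank satisfies $a_q=\ell_R(R/I_e)$ with $\mathfrak m^{[q]}\subseteq I_e$, and your contrapositive proof of $(1)\Rightarrow(2)$ is complete: a single $c$ witnessing the failure of strong $F$-regularity lies in every $I_e$, so $a_q\le\ell_R(R/(c,\mathfrak m^{[q]}))$, which is the Hilbert--Kunz function of the $(d-1)$-dimensional ring $R/cR$ and hence $O(q^{d-1})$ by Monsky.

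The genuine gap is exactly where you flag it: $(2)\Rightarrow(1)$ is not proved, and the ``propagate one splitting to a positive density of splittings'' strategy is not the mechanism by which positivity is actually obtained; a single splitting $R\to R^{(e_0)}$ composed with later Frobenii only yields inequalities of the form $a_{q_0q_1}\ge a_{q_1}$, which give no lower bound of order $q^d$. What Huneke and Leuschke do instead is push your identity $a_q=\ell_R(R/I_e)$ one step further: choosing a parameter ideal $J=(x_1,\dots,x_d)$ and a socle representative $u$ of $R/J$, one identifies $I_e$ with the colon ideal $(J^{[q]}:u^q)$, and Gorenstein duality in the Artinian ring $R/J^{[q]}$ converts the colength of a colon ideal into a difference of colengths:
\[
a_q=\ell_R\bigl(R/J^{[q]}\bigr)-\ell_R\bigl(R/(J+uR)^{[q]}\bigr),
\qquad\text{hence}\qquad
s(R)=e_{\mathrm{HK}}(J)-e_{\mathrm{HK}}(J+uR).
\]
In particular the limit exists, and positivity is then \emph{not} a splitting argument at all: it follows from the Hochster--Huneke theorem that for a complete reduced equidimensional ring, $e_{\mathrm{HK}}(I)=e_{\mathrm{HK}}(I')$ with $I\subseteq I'$ forces $I'\subseteq I^*$. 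If $R$ is $F$-rational then $J^*=J$, so $u\notin J^*$, so the two Hilbert--Kunz multiplicities differ and $s(R)>0$; conversely this formula also reproves your direction, since $u\in J^*$ forces the difference to vanish. To complete your write-up, replace the heuristic final paragraph with this duality computation and the appeal to the tight-closure/Hilbert--Kunz comparison theorem.
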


From these results, we can say that the $F$-signature contains enough information to characterize some kind of classes of commutative rings. Theorem \ref{Thm1.3} was generalized by Y. Yao to that $s(R)\ge1-\frac{1}{d!p^d}$ implies $R$ is regular (see \cite{Y}). Besides, as in the assumption of Theorem \ref{Thm1.3}, it is also a nontrivial question whether the $F$-signature exists or not. Several papers have dealt with this problem; for example, Gorenstein local rings are known to admit $F$-signatures.

In this article, we obtained the following result.
\begin{thm}\label{Thm5.4}
For any reduced graded $F$-finite Gorenstein ring $R$ and if $R_0$ is assumed to be a perfect field, we have the following inequality.
\begin{equation}
s(R)\le\frac{(-a)^de^{\prime}}{2^{d-1}d!}.
\label{EqThm5.4}
\end{equation}
Here, $d$ is the dimension of $R$, 
$a=a(R)$ is the $a$-invariant of $R$, 
and $e^{\prime}=\displaystyle{\lim}_{t\rightarrow 1}(1-t^d)P(R,t)$, 
where $P(R,t)$ is the Poincar\'{e} series of $R$.

\end{thm}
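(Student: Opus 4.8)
The plan is to compute the free rank $a_q$ of $R^{1/q}:=R^{(e)}$ degree by degree and then bound the total by a Riemann sum converging to the volume of a cross-polytope. I regard $R^{1/q}$ as a $\tfrac1q\mathbb Z$-graded $R$-module with $(R^{1/q})_{j/q}=R_j$; write $h_n=\dim_k R_n$ and $\mathfrak m$ for the irrelevant maximal ideal. Since $R$ is graded Gorenstein with $\omega_R\cong R(a)$ and $R^{1/q}$ is a maximal Cohen--Macaulay $R$-module, the first step is to record the graded self-duality coming from $\omega_{R^{1/q}}\cong(\omega_R)^{1/q}$, namely
\[ \operatorname{Hom}_R(R^{1/q},R)\cong R^{1/q}\!\left(\tfrac{a(1-q)}{q}\right). \]
A free summand isomorphic to $R(-\nu)$ amounts to a degree-$0$ inclusion $R(-\nu)\hookrightarrow R^{1/q}$ together with a degree-$0$ projection $R^{1/q}\twoheadrightarrow R(-\nu)$ whose composite is the identity, so I would analyze these two maps degree by degree.

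The inclusion is an element of $(R^{1/q})_\nu=R_{q\nu}$, which forces $\nu\ge0$; by the displayed duality the projection lies in $\operatorname{Hom}_R(R^{1/q},R)_{-\nu}\cong (R^{1/q})_{-\nu+a(1-q)/q}$, which is nonzero only when $\nu\le\frac{(-a)(q-1)}{q}$. Thus, setting $j=q\nu\in\mathbb Z$ and $M=(-a)(q-1)$, every free summand has the form $R(-j/q)$ with $0\le j\le M$. Writing $a_q^{(j)}$ for the number of such summands, I would show that their generators are $k$-linearly independent in $(R^{1/q})_{j/q}$ and that the matching projections are linearly independent in $\operatorname{Hom}_R(R^{1/q},R)_{-j/q}$, which yields the two-sided bound
\[ a_q^{(j)}\le\min\bigl(h_j,\,h_{M-j}\bigr),\qquad\text{hence}\qquad a_q=\sum_{j=0}^{M}a_q^{(j)}\le\sum_{j=0}^{M}\min\bigl(h_j,\,h_{M-j}\bigr). \]

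To finish, I would use that $e'$ governs the leading Hilbert growth $h_n\sim\frac{e'}{(d-1)!}\,n^{d-1}$ (the order-$d$ pole of $P(R,t)$ at $t=1$), so that $\min(h_j,h_{M-j})\sim\frac{e'}{(d-1)!}\min(j,M-j)^{d-1}$ and the sum becomes a Riemann sum for $M^d\int_0^1\min(x,1-x)^{d-1}\,dx=\frac{M^d}{2^{d-1}d}$. Since $M\sim(-a)q$, this gives
\[ a_q\lesssim\frac{e'}{(d-1)!}\cdot\frac{M^d}{2^{d-1}d}=\frac{(-a)^d e'}{2^{d-1}d!}\,q^d, \]
and dividing by $q^d$ and letting $q\to\infty$ produces the claimed inequality. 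The main obstacle I anticipate is the rigorous per-degree estimate $a_q^{(j)}\le\min(h_j,h_{M-j})$: one must check that extracting a maximal free summand in a fixed degree really yields that many independent inclusions and independent projections (so that the Gorenstein pairing is nondegenerate on the summand), and one must control the error terms in $h_n\sim\frac{e'}{(d-1)!}n^{d-1}$ uniformly over the whole window $0\le j\le M$ so they vanish after dividing by $q^d$; the $F$-finiteness and isolated-singularity hypotheses are what I expect to supply this uniformity.
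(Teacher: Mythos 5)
Your proposal is correct in substance and follows the same underlying strategy as the paper --- a degree window for generators of free summands coming from Gorenstein duality, a dimension count in each degree, and an asymptotic evaluation --- but the implementation differs in a way worth recording. The paper obtains the window $0\le\deg\alpha\le -a(q-1)$ (Lemma \ref{Lem5.1}) by reducing modulo Frobenius powers of a homogeneous system of parameters and pairing against the socle generator $z$ of the Artinian Gorenstein quotient $R/(f_1^q,\dots,f_d^q)$; it then splits the generators at degree $\tfrac{-a(q-1)}{2}$ and bounds each half by $\sum_{n\le -a(q-1)/2}r_n$, the low half via linear independence of the $\alpha_i$ and the high half via linear independence of the $\alpha_i z^q$ together with the symmetry of the Artinian Hilbert function. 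You instead invoke the $\tfrac1q\mathbb Z$-graded duality $\operatorname{Hom}_R(R^{1/q},R)\cong R^{1/q}\bigl(a(1-q)/q\bigr)$, which packages the socle pairing once and for all: the inclusion lives in $R_j$ and the projection in $R_{M-j}$, giving the cleaner per-degree bound $a_q^{(j)}\le\min(h_j,h_{M-j})$. This avoids choosing a system of parameters entirely and makes the symmetry transparent; the two bounds have the same leading term, since $\sum_{j=0}^{M}\min(h_j,h_{M-j})\le 2\sum_{j\le M/2}h_j$.

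One step as written needs repair. The pointwise asymptotic $h_n\sim\frac{e'}{(d-1)!}n^{d-1}$ is false for non-standard gradings, which the theorem allows and which the paper's $A_n$, $D_n$, $E_n$ examples actually use: for $k[x]$ with $\deg x=2$ one has $e'=\tfrac12$ while $h_n$ oscillates between $0$ and $1$. Consequently the Riemann sum for $\int_0^1\min(x,1-x)^{d-1}\,dx$ does not converge termwise. The fix is to drop the pointwise estimate, bound $\min(h_j,h_{M-j})$ by $h_j$ for $j\le M/2$ and by $h_{M-j}$ for $j>M/2$ so that $a_q\le 2\sum_{n=0}^{\lceil M/2\rceil}h_n$, and then use only the partial-sum asymptotic $\sum_{n\le N}h_n=\frac{e'}{d!}N^d+o(N^d)$, which does hold because $P(R,t)/(1-t)$ has a pole of order $d+1$ at $t=1$ with the correct leading coefficient. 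This is exactly the computation the paper performs and yields the same constant $\frac{(-a)^de'}{2^{d-1}d!}$. Two minor remarks: Theorem \ref{Thm5.4}, unlike Theorem \ref{Thm5.3}, does not assume an isolated singularity, so that hypothesis is not available to supply uniformity; and both your argument and the paper's tacitly use that a maximal free summand of $R^{(e)}$ can be split off with homogeneous generators, i.e.\ graded Krull--Schmidt over the $*$-local ring $R$.
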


In dimension 2, the equality holds in $(\ref{EqThm5.4})$ if $R$ is regular or a rational double point. 
This equality never holds for regular 
rings of dimension greater than or equal to 3.

\section{Preliminaries}
In the following, we write $q=p^e$ for any positive integer $e$.

\begin{prop}
Let $R$ be a $d$-dimensional Cohen-Macaulay local ring of characteristic
 $p$. The following are equivalent.
\begin{enumerate}
\item $R$ is $F$-rational.
\item $0$ is tightly closed in $H_{\mathfrak{m}}^d(R)$.
\end{enumerate}
\end{prop}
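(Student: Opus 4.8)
The plan is to reduce the statement about $0^*$ in $H^d_{\mathfrak{m}}(R)$ to a statement about tight closure of parameter ideals, exploiting the Cohen--Macaulay hypothesis. Fix a system of parameters $x_1, \ldots, x_d$. Because $R$ is Cohen--Macaulay these form a regular sequence, and I would use the standard presentation
\[
H^d_{\mathfrak{m}}(R) = \varinjlim_t R/(x_1^t, \ldots, x_d^t),
\]
whose transition maps are multiplication by $x_1 \cdots x_d$. The first step is to record that, for a regular sequence, the colon identity $\bigl((x_1^{t+1}, \ldots, x_d^{t+1}) : x_1 \cdots x_d\bigr) = (x_1^t, \ldots, x_d^t)$ holds, so every transition map is injective; consequently each canonical map $R/(x_1^t, \ldots, x_d^t) \to H^d_{\mathfrak{m}}(R)$ is injective, and $H^d_{\mathfrak{m}}(R)$ is the increasing union of these subgroups.

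The heart of the argument is a compatibility lemma. Writing $\eta = [z + (x_1^t, \ldots, x_d^t)]$ for the class of $z$ at level $t$, I would compute the induced Frobenius action on local cohomology as $F^e(\eta) = [z^q + (x_1^{tq}, \ldots, x_d^{tq})]$ with $q = p^e$, so that for $c \in R^\circ$ one has $c\,F^e(\eta) = [c z^q + (x_1^t, \ldots, x_d^t)^{[q]}]$. By the injectivity just established, this class vanishes if and only if $c z^q \in (x_1^t, \ldots, x_d^t)^{[q]}$. Reading off the definitions, $\eta \in 0^*_{H^d_{\mathfrak{m}}(R)}$ if and only if there is a single $c \in R^\circ$ with $c z^q \in (x_1^t, \ldots, x_d^t)^{[q]}$ for all $q \gg 0$, i.e.\ if and only if $z \in (x_1^t, \ldots, x_d^t)^*$. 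Thus, at each level $t$, the submodule $0^*_{H^d_{\mathfrak{m}}(R)}$ meets the image of $R/(x_1^t, \ldots, x_d^t)$ in precisely the image of $(x_1^t, \ldots, x_d^t)^*$.

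With this correspondence in hand both implications are immediate. If $0$ is tightly closed in $H^d_{\mathfrak{m}}(R)$, then taking $t = 1$ forces $(x_1, \ldots, x_d)^* = (x_1, \ldots, x_d)$, exhibiting a tightly closed parameter ideal, so $R$ is $F$-rational. Conversely, if $R$ is $F$-rational then every parameter ideal is tightly closed --- here I would invoke the standard fact that for a Cohen--Macaulay ring tight-closedness of one parameter ideal propagates to all of them, so that $(x_1^t, \ldots, x_d^t)^* = (x_1^t, \ldots, x_d^t)$ for every $t$. The lemma then gives $0^*_{H^d_{\mathfrak{m}}(R)} \cap \operatorname{image}\bigl(R/(x_1^t, \ldots, x_d^t)\bigr) = 0$ for all $t$, and since these images exhaust $H^d_{\mathfrak{m}}(R)$ we conclude $0^*_{H^d_{\mathfrak{m}}(R)} = 0$.

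The main obstacle I anticipate is the compatibility lemma --- specifically keeping the element $c$ uniform across all $q$ and all levels $t$, and justifying that the Frobenius action on the direct limit is computed \emph{levelwise} as claimed. The injectivity of the transition maps (equivalently the colon identity for regular sequences) is what makes ``$c\,F^e(\eta) = 0$'' equivalent to the clean membership $c z^q \in (x_1^t, \ldots, x_d^t)^{[q]}$ rather than to a membership only after further multiplication by powers of $x_1 \cdots x_d$; getting this bookkeeping exactly right, together with the passage between the ``some parameter ideal'' and ``every parameter ideal'' formulations of $F$-rationality, is where the real care is needed.
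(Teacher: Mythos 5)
The paper does not actually prove this proposition; it is quoted as a known result with a pointer to Smith and to Huneke's CBMS notes (Theorem 4.4 there). Your argument is, in substance, the standard proof given in those references, and it is correct: the Cohen--Macaulay hypothesis makes $x_1,\ldots,x_d$ a regular sequence, the colon identity $\bigl((x_1^{t+1},\ldots,x_d^{t+1}):x_1\cdots x_d\bigr)=(x_1^t,\ldots,x_d^t)$ gives injective transition maps in $H^d_{\mathfrak m}(R)=\varinjlim_t R/(x_1^t,\ldots,x_d^t)$, and the levelwise computation of the Frobenius action then identifies $0^*_{H^d_{\mathfrak m}(R)}\cap\operatorname{image}\bigl(R/(x_1^t,\ldots,x_d^t)\bigr)$ with the image of $(x_1^t,\ldots,x_d^t)^*$. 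The two points you flag are exactly the ones that need an explicit citation or a sentence of justification: (i) the identification of the natural Frobenius action on $H^d_{\mathfrak m}(R)$ with the map $M\to F^e(M)$ appearing in the definition of tight closure for submodules of modules (so that ``$0$ is tightly closed'' really is detected levelwise as you claim), and (ii) the propagation theorem asserting that for an (excellent, or Cohen--Macaulay) local ring a single tightly closed ideal generated by a full system of parameters forces all parameter ideals to be tightly closed --- this is what upgrades your $t=1$ conclusion to $F$-rationality in the direction $(2)\Rightarrow(1)$. With those two standard inputs supplied, the proof is complete.
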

\begin{proof}
See 
\cite{Sm}, \cite[Theorem 4.4]{H}.
\end{proof}

\begin{prop}
For any $F$-pure Cohen-Macaulay positively graded ring $R$ of dimension $d$ and characteristic $p$, we have $a(R)\le 0$.
\end{prop}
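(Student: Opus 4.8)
The plan is to work with the graded top local cohomology module $H^d_{\mathfrak{m}}(R)$, where $\mathfrak{m}$ denotes the irrelevant maximal ideal, and to exploit the fact that the Frobenius acts on it while scaling degrees by $p$. Recall that for a Cohen--Macaulay positively graded ring $R$ of dimension $d$ one has $a(R)=\max\{\, n : [H^d_{\mathfrak{m}}(R)]_n\neq 0 \,\}$, and that $H^d_{\mathfrak{m}}(R)$ is the only nonvanishing local cohomology module; being Artinian it is bounded above in degree, so that $a(R)$ is well defined. I would argue by contradiction, assuming $a:=a(R)>0$.

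First I would pin down the grading behaviour of the Frobenius on local cohomology. Choosing a homogeneous system of parameters $\theta_1,\dots,\theta_d$ and computing $H^d_{\mathfrak{m}}(R)$ as the top cohomology of the \v{C}ech complex on the $\theta_i$, a class is represented by a fraction $z/(\theta_1\cdots\theta_d)^t$ of degree $\deg z - t\sum_i \deg\theta_i$. The Frobenius sends this class to $z^p/(\theta_1\cdots\theta_d)^{pt}$, whose degree equals exactly $p$ times the original degree. Thus the induced map $F\colon H^d_{\mathfrak{m}}(R)\to H^d_{\mathfrak{m}}(R)$ multiplies degrees by $p$, carrying $[H^d_{\mathfrak{m}}(R)]_n$ into $[H^d_{\mathfrak{m}}(R)]_{pn}$.

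Next I would invoke $F$-purity. Since $R\to F_*R$ is pure, the map it induces on local cohomology is injective; equivalently, the natural Frobenius action on $H^d_{\mathfrak{m}}(R)$ is injective (that is, $R$ is $F$-injective). Now pick a nonzero homogeneous element $\eta\in[H^d_{\mathfrak{m}}(R)]_a$ realizing the top degree $a>0$. By injectivity $F(\eta)\neq 0$, while by the previous step $F(\eta)\in[H^d_{\mathfrak{m}}(R)]_{pa}$. But $pa>a=a(R)$, so $[H^d_{\mathfrak{m}}(R)]_{pa}=0$, forcing $F(\eta)=0$, a contradiction. Hence $a(R)\le 0$.

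The main obstacle is the step that transfers $F$-purity to injectivity of the Frobenius on $H^d_{\mathfrak{m}}(R)$. This rests on the fact that purity of $R\to F_*R$ survives after applying $H^d_{\mathfrak{m}}(-)$: purity is stable under tensoring, and local cohomology is a direct limit of such base changes, so no nonzero class can be annihilated by Frobenius. Once the degree-scaling and the $F$-injectivity are both in hand, the contradiction is immediate, and the positivity of the grading is needed only to guarantee that $H^d_{\mathfrak{m}}(R)$ is bounded above, so that a genuine top degree $a$ exists.
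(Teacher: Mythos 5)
Your argument is correct, and it is the standard proof of this fact: the paper itself gives no argument, merely citing Fedder--Watanabe, Remark 1.6, and the proof recorded there is essentially the one you give (Frobenius acts on $H^d_{\mathfrak{m}}(R)$ multiplying degrees by $p$, $F$-purity forces this action to be injective, so a nonzero class in positive top degree $a$ would produce a nonzero class in degree $pa>a$). The only step worth tightening is the passage from purity of $R\to F_*R$ to injectivity on $H^d_{\mathfrak{m}}$: since $H^d_{\mathfrak{m}}(-)$ is the top nonvanishing local cohomology functor it is right exact and commutes with direct limits, hence $H^d_{\mathfrak{m}}(M)\cong H^d_{\mathfrak{m}}(R)\otimes_R M$, and purity applied to the module $H^d_{\mathfrak{m}}(R)$ gives exactly the injectivity you need; with that said explicitly, your proof is complete.
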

\begin{proof}
See \cite{F-W}, Remark 1.6.
\end{proof}

\begin{prop}
Let $R$ be an $F$-injective Cohen-Macaulay graded 
ring of dimension $d$ and characteristic $p$, which is an isolated singularity. If $a(R)<0$, then $R$ is $F$-rational.
\end{prop}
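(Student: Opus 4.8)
The plan is to reduce the statement to a vanishing assertion about the top local cohomology module and then run a degree-counting argument that plays the Frobenius action against the $a$-invariant. By the first proposition quoted above, since $R$ is Cohen--Macaulay it is $F$-rational if and only if $0$ is tightly closed in $H^d_{\mathfrak{m}}(R)$, where $\mathfrak{m}$ is the irrelevant maximal ideal; so it suffices to prove that the tight closure $N := 0^*_{H^d_{\mathfrak{m}}(R)}$ of the zero submodule vanishes. I would first record three structural features of $N$. First, $N$ is a graded submodule of $H^d_{\mathfrak{m}}(R)$ that is stable under the natural Frobenius action $F$: if $c\,F^e(\xi)=0$ for $e\gg 0$ then $c\,F^e(F\xi)=c\,F^{e+1}(\xi)=0$ for $e\gg 0$, so $F\xi\in N$. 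Second, $F$ is $p$-semilinear and multiplies degrees by $p$, hence carries $[N]_n$ into $[N]_{pn}$. Third, $F$-injectivity of $R$ means $F$ is injective on $H^d_{\mathfrak{m}}(R)$, and therefore on $N$.

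Next I would use the isolated singularity hypothesis to force $N$ to have finite length. Since $R$ is $F$-finite and reduced it admits a test ideal $\tau$, and because $R_{\mathfrak{p}}$ is regular --- in particular strongly $F$-regular --- for every non-maximal prime $\mathfrak{p}$, the locus $V(\tau)$ is contained in $\{\mathfrak{m}\}$; thus $\tau$ is $\mathfrak{m}$-primary. Every element of $\tau$ annihilates $N$ (this is the defining property of a test element, applied with $e=0$), so $\tau\cdot N=0$. Hence $N$ is annihilated by an $\mathfrak{m}$-primary ideal, and being a submodule of the Artinian module $H^d_{\mathfrak{m}}(R)$ it is itself Artinian; an Artinian module killed by a power of $\mathfrak{m}$ has finite length. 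In particular $N$ is nonzero in only finitely many degrees and is bounded below.

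Finally I would combine these observations with the hypothesis $a(R)<0$. Suppose $N\neq 0$, and let $n_0$ be the least degree in which $N$ is nonzero, which exists by the boundedness just established. Because $a(R)<0$, the module $H^d_{\mathfrak{m}}(R)$ vanishes in all nonnegative degrees, so $n_0<0$. Choose a nonzero homogeneous element $\xi\in[N]_{n_0}$. By $F$-injectivity $F\xi\neq 0$, and $F\xi\in[N]_{pn_0}$; but $pn_0<n_0$ since $n_0<0$ and $p\geq 2$, contradicting the minimality of $n_0$. Therefore $N=0$, and $R$ is $F$-rational.

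I expect the main obstacle to be the middle step, establishing the finite length of $N$, since that is the only place where input beyond formal manipulation is required: one must invoke the existence of test elements in the $F$-finite setting together with the fact that regular (hence strongly $F$-regular) local rings contribute nothing to the test locus, so that the test ideal is $\mathfrak{m}$-primary. The closing degree argument is short once finiteness is in hand, and it is precisely there that the sign condition $a(R)<0$ is indispensable: if $a(R)=0$ the least degree $n_0$ could equal $0$, and then $F$ would map $[N]_0$ into itself, producing no contradiction --- consistent with the existence of $F$-pure, non-$F$-rational rings with $a(R)=0$.
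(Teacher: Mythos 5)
Your proposal is correct and follows essentially the same route as the paper: both arguments use the isolated singularity hypothesis to make the test ideal $\mathfrak{m}$-primary, deduce that the relevant submodule of $H^d_{\mathfrak{m}}(R)$ (the tight closure of zero in your version, the annihilator of $\tau(R)$ in the paper's) has finite length and hence lives in boundedly many degrees, and then use $F$-injectivity together with $a(R)<0$ to push a nonzero element to arbitrarily negative degree, yielding a contradiction. Your write-up is a somewhat more carefully organized version of the paper's sketch, but there is no substantive difference in method.
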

\begin{proof} 
Let $\tau(R)$ be the test ideal of $R$. Because $R$ has an isolated singularity,
$\tau(R)$ is $\mathfrak{m}=\oplus_{n>0} R_n$-primary. Hence the annihilator of $\tau(R)$ in 
$H_{\mathfrak{m}}^d(R)$ is finitely generated. 

If $R$ is not $F$-rational, then there exists a non-zero element $x$ in 
$H_{\mathfrak{m}}^d(R)$ that satisfies $cx^q=0$ for all $c\in\tau(R)$ and 
$q$. Since $a(R)<0$ and $R$ is F-injective, $x^q\ne 0$ and 
$\deg x^q$ gets arbitrary small. Hence $\tau(R) x^q$ cannot be $0$. A contradiction! 
\end{proof}

\section{$F$-signature}
Let $R$ be a reduced commutative ring of positive characteristic with a perfect residue field. As before, let $p$ be $\mathrm{char} R$ and let $q=p^e$ for any positive integer $e$.

In the following, we assume $R$ is $F$-finite, i.e., $R^{(1)}$ is finite as an $R$-module.

\begin{dfn}
Decompose $R^{(e)}$ as $R^{(e)}=R^{a_q}\oplus M_q$, where $M_q$ is an $R$-module which does not contain $R$ as a direct summand. If the limit exists, we set
\[ s(R)=\lim_{q\rightarrow \infty}\frac{a_q}{q^d}, \]
and call it $F$-{\it signature}.
\end{dfn}

\begin{rem}
For each $e$, we have $a_q\le\mathrm{rank}R^{(e)}\le q^d$. Thus $s(R)$ satisfies $s(R)\le 1$.
\end{rem}

\begin{ex}\label{Ex4.3}
Let $R$ be a regular local ring of dimension $d$ with a perfect residue field. By the 
theorem of Kunz \cite{Ku}, 
we have $R^{(e)}=R^{q^d}$. Thus we have $a_q=q^d$, and $s(R)=1$.

The converse also holds. See \cite[Corollary 16]{H-L}.
\end{ex}

\begin{prop}\label{Prop4.4}
Let $\hat{R}$ be the $\mathfrak{m}$-adic completion of $R$. Then we have $s(R)=s(\hat{R})$.
\end{prop}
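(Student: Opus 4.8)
The plan is to reduce the statement to the single assertion that the free rank is unchanged under completion, and to extract a formula for the free rank that makes this transparent. For a finitely generated module $M$ over the local ring $(R,\mathfrak m,k)$, write $\operatorname{frk}(M)$ for the largest $r$ with $M\cong R^r\oplus N$; by the definition of $a_q$ we have $a_q=\operatorname{frk}(R^{(e)})$. My first move is to record that, since any surjection onto a free module splits, $\operatorname{frk}(M)$ is exactly the largest $r$ admitting a surjection $M\twoheadrightarrow R^r$, and that by Nakayama such a surjection exists precisely when one can find $\phi_1,\dots,\phi_r\in\operatorname{Hom}_R(M,R)$ whose reductions are linearly independent in $\operatorname{Hom}_k(M/\mathfrak m M,k)$. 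Consequently
\[
\operatorname{frk}(M)=\dim_k\operatorname{im}\bigl(\rho_M\colon \operatorname{Hom}_R(M,R)\to \operatorname{Hom}_k(M/\mathfrak m M,k)\bigr),
\]
where $\rho_M$ is induced by $R\to k$; equivalently $\operatorname{frk}(M)=\dim_k\bigl(\operatorname{Hom}_R(M,R)/\operatorname{Hom}_R(M,\mathfrak m)\bigr)$. This formula is intrinsic, which incidentally reproves that $a_q$ is independent of the chosen decomposition.

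Next I would pass to $\hat R$. Because $R$ is $F$-finite, $R^{(e)}$ is a finite $R$-module, so $\widehat{R^{(e)}}=R^{(e)}\otimes_R\hat R$, and the Frobenius pushforward commutes with completion, giving $\widehat{R^{(e)}}\cong (\hat R)^{(e)}$ as $\hat R$-modules; this is the one place $F$-finiteness enters. Since $\hat R$ has the same residue field $k$, the same dimension $d$, and $\hat M/\hat{\mathfrak m}\hat M=M/\mathfrak m M$, it remains only to compare $\operatorname{frk}_R(M)$ with $\operatorname{frk}_{\hat R}(\hat M)$ for $M=R^{(e)}$.

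For this comparison I would apply $-\otimes_R\hat R$ to the defining map $\rho_M$. Flatness of completion together with $\operatorname{Hom}_{\hat R}(\hat M,\hat R)=\operatorname{Hom}_R(M,R)\otimes_R\hat R$ (valid for finite $M$ over the Noetherian ring $R$) identifies $\rho_{\hat M}$ with $\rho_M\otimes_R\hat R$; and since the target $\operatorname{Hom}_k(M/\mathfrak m M,k)$ is a finite-dimensional $k$-vector space, tensoring it with $\hat R$ changes nothing. Hence $\operatorname{im}(\rho_{\hat M})=\operatorname{im}(\rho_M)\otimes_R\hat R$ has the same $k$-dimension as $\operatorname{im}(\rho_M)$, i.e.\ $\operatorname{frk}_{\hat R}(\hat M)=\operatorname{frk}_R(M)$. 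Applying this to $M=R^{(e)}$ yields $a_q(\hat R)=a_q(R)$ for every $q$, and dividing by $q^d$ and letting $q\to\infty$ gives $s(R)=s(\hat R)$, with either limit existing if and only if the other does.

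The main obstacle I anticipate is not the flat-base-change bookkeeping but the identification $\widehat{R^{(e)}}\cong(\hat R)^{(e)}$ with the correct twisted module structure: one must check that the $\mathfrak m$-adic topology on $R^{(e)}=F^e_*R$ is cofinal with the one coming from the Frobenius powers, and that the Frobenius extends continuously to $\hat R$. Once this compatibility is in hand, the remainder is the formal consequence of the free-rank formula described above.
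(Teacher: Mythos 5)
Your argument is correct, and it is considerably more than what the paper offers: the paper's entire proof of Proposition \ref{Prop4.4} is the one-line remark that ``the completion is fully faithful.'' What you have done is supply the mechanism that makes that remark honest. The direction $a_q(R)\le a_q(\hat R)$ is immediate (a free summand of $R^{(e)}$ completes to a free summand of $\widehat{R^{(e)}}$); the substance lies in descending free summands from $\hat R$ back to $R$, and your formula $\operatorname{frk}(M)=\dim_k\operatorname{im}\bigl(\operatorname{Hom}_R(M,R)\to\operatorname{Hom}_k(M/\mathfrak m M,k)\bigr)$ handles exactly that, since both sides of the formula are computed by a finitely presented $\operatorname{Hom}$ and a residue field, each of which behaves trivially under the flat local base change $R\to\hat R$ with $\hat{\mathfrak m}=\mathfrak m\hat R$ and unchanged residue field. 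This is the standard device in the $F$-signature literature (it is essentially how Aberbach--Enescu and Yao set up $a_q$ as a length, and it also reproves the independence of $a_q$ from the chosen decomposition, as you note). You are also right to isolate $\widehat{R^{(e)}}\cong(\hat R)^{(e)}$ as the only point where $F$-finiteness and the cofinality of the $\mathfrak m$- and $\mathfrak m^{[q]}$-adic topologies are used; that identification is true and standard, and with it your proof is complete. In short: same conclusion, but you have written the proof the paper declined to write.
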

\begin{proof}
This is trivial, because the completion is fully faithful.
\end{proof}

\begin{thm}
Let $R$ be a Gorenstein ring. The following are equivalent.
\begin{enumerate}
\item $s(R)>0$.
\item $R$ is $F$-rational.
\item $R$ is $F$-regular.
\end{enumerate}
\end{thm}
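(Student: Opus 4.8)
The plan is to prove the two equivalences separately: $(1)\Leftrightarrow(2)$ by reducing to the complete local setting and invoking the Huneke--Leuschke criterion recalled above, and $(2)\Leftrightarrow(3)$ by a Matlis duality argument that is special to the Gorenstein case.

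For $(1)\Leftrightarrow(2)$, I would first pass to the $\mathfrak m$-adic completion. By Proposition \ref{Prop4.4} we have $s(R)=s(\hat R)$, so $s(R)>0$ if and only if $s(\hat R)>0$. Since $R$ is $F$-finite it is excellent, so $\hat R$ is again reduced, Gorenstein and $F$-finite with perfect residue field, and $F$-rationality transfers in both directions between $R$ and $\hat R$. The equivalence $s(\hat R)>0\Leftrightarrow\hat R$ is $F$-rational is then exactly the theorem of Huneke and Leuschke \cite{H-L} for complete Gorenstein local rings recalled in Section 1, and the claim descends to $R$.

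For $(2)\Leftrightarrow(3)$, the implication $(3)\Rightarrow(2)$ is general: a weakly $F$-regular excellent ring is normal and Cohen--Macaulay with every parameter ideal tightly closed, hence $F$-rational. For the converse I would use that $R$ is Gorenstein. Working over the complete Gorenstein ring $\hat R$, local duality identifies $H^d_{\mathfrak m}(\hat R)$ with the injective hull $E=E_{\hat R}(k)$ of the residue field, and the test ideal satisfies $\tau=\mathrm{Ann}_{\hat R}\!\left(0^{*}_{E}\right)$, where $0^{*}_{E}$ is the tight closure of zero in $E$. Now $F$-rationality says exactly that $0^{*}_{H^d_{\mathfrak m}(\hat R)}=0$, so under the duality $0^{*}_{E}=0$ and therefore $\tau=\hat R$; thus $\hat R$ is weakly $F$-regular, and for Gorenstein rings weak and strong $F$-regularity coincide. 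Descending to $R$ yields $(3)$.

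The main obstacle is this last duality step: one must know that in the Gorenstein case the tight closure of zero in the top local cohomology is Matlis dual to the test ideal, so that the \emph{single} vanishing condition defining $F$-rationality already forces \emph{every} ideal to be tightly closed. The surrounding bookkeeping---transfer of the hypotheses and of $F$-rationality and $F$-regularity under completion---rests only on the excellence of $F$-finite rings.
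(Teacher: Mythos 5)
The paper offers no argument here at all: its ``proof'' is the single citation \cite[Theorem 11]{H-L}, which already asserts the equivalence of positivity of the $F$-signature with weak $F$-regularity (hence with $F$-rationality and strong $F$-regularity) for Gorenstein rings. Your proposal instead reassembles the statement from pieces, and it is correct: the reduction of $(1)\Leftrightarrow(2)$ to the complete case via Proposition \ref{Prop4.4} and the Huneke--Leuschke theorem for complete Gorenstein local rings recalled in Section 1 is sound, since $F$-finite rings are excellent and reducedness, the Gorenstein property, and $F$-rationality all pass back and forth under completion. Your $(2)\Rightarrow(3)$ argument is also the standard one, and is in fact close to how \cite{H-L} themselves dispose of this step; the one place to be slightly careful is the identification of weak $F$-regularity with the vanishing of the tight closure of zero in $E=E_{\hat R}(k)$, which in general involves the \emph{finitistic} tight closure $0^{*fg}_E\subseteq 0^{*}_E$. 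Since you are deducing weak $F$-regularity \emph{from} $0^{*}_E=0$, the inclusion goes the right way and nothing breaks; the coincidence of weak and strong $F$-regularity for Gorenstein rings is the content of \cite{HH}. In short, your route is longer but self-contained modulo standard tight closure facts, whereas the paper simply outsources the whole theorem to \cite{H-L}; what your version buys is an explicit explanation of where the Gorenstein hypothesis enters, namely in identifying $H^d_{\mathfrak m}(\hat R)$ with the injective hull so that the single vanishing condition defining $F$-rationality controls all ideals.
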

\begin{proof}
See \cite[Theorem 11]{H-L}.
\end{proof}

\begin{ex}
For a 2-dimensional rational double point, we can calculate its $F$-signature several ways. See, for example, \cite{W-Y}. The results are listed below.
\begin{center}
\begin{tabular}
[c]{cccc}
type    &  equation            & $\mathrm{char}R$ & $s(R)$             \\\hline
$(A_n)$ & $f=x^2+y^2+z^{n+1}$  & $p\ge2$          & $\frac{1}{n+1}$    \\
$(D_n)$ & $f=x^2+yz^2+y^{n-1}$ & $p\ge3$          & $\frac{1}{4(n-2)}$ \\
$(E_6)$ & $f=x^2+y^3+z^4$      & $p\ge5$          & $\frac{1}{24}$     \\
$(E_7)$ & $f=x^2+y^3+yz^3$     & $p\ge5$          & $\frac{1}{48}$     \\
$(E_8)$ & $f=x^2+y^3+z^5$      & $p\ge7$          & $\frac{1}{120}$
\end{tabular}
\end{center}
\end{ex}

\section{Proof of the theorem}
In this section, we give a proof of the main theorem and calculate some examples. We use the notation of the previous section. 
Throughout this section, let $R=\oplus_{n\ge 0} R_n$ be an F-finite graded 
ring of positive characteristic. We denote $\mathfrak{m}=R_+=
\oplus_{n>0} R_n$. 
  
\begin{lem}\label{Lem5.1}
Let $R$ be a graded Gorenstein ring, and let $a=a(R)$. Then, for any homogeneous element $\alpha$ in $R$ such that $\alpha F^e$ is split mono, we have
\[ \deg\alpha\le-a(q-1). \]
\end{lem}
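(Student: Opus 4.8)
The plan is to translate the splitting condition into a statement about a distinguished generator of $\mathrm{Hom}_R(F^e_*R,R)$ and then simply read off degrees. Write $F^e_*R$ for $R^{(e)}$ regarded as a graded $R$-module, where the Frobenius rescales degrees by the factor $1/q$, so that a homogeneous element of $R$-degree $n$ sits in $F^e_*$-degree $n/q$. The map $\alpha F^e\colon R\to F^e_*R$ is the $R$-linear map sending $1\mapsto\alpha$, and saying that it is split mono is exactly saying that there is an $R$-linear $\psi\colon F^e_*R\to R$ with $\psi(\alpha)=1$. Since $R$ is $F$-finite and Gorenstein, $\mathrm{Hom}_R(F^e_*R,R)$ is free of rank one over $F^e_*R$; I would fix a homogeneous generator $\Phi_e$ (the Frobenius trace). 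Then every $\psi$ has the form $\psi(-)=\Phi_e(\beta\,\cdot\,-)$ for some $\beta\in F^e_*R$, so being split mono forces the existence of a $\beta$ with $\Phi_e(\alpha\beta)=1$.

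First I would pin down the degree of $\Phi_e$. Using $\omega_R\cong R(a)$ for the Gorenstein ring $R$ together with Grothendieck duality for the finite Frobenius, one has $\mathrm{Hom}_R(F^e_*R,\omega_R)\cong F^e_*\omega_R$, and therefore
\[ \mathrm{Hom}_R(F^e_*R,R)\cong (F^e_*\omega_R)(-a)\cong (F^e_*R)\bigl(\tfrac{a}{q}-a\bigr), \]
where the inner shift $a/q$ is precisely the one produced by the degree rescaling of $F^e$ applied to $\omega_R=R(a)$. Unwinding this identification, $\Phi_e$ is homogeneous and carries $(F^e_*R)_j$ into $R_{\,j+a(q-1)/q}$; equivalently, its degree as a map is $a(q-1)/q$. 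As a sanity check, for $R=k[x]$, where $a=-1$, the generator sends $x^{q-1}\mapsto 1$ and kills $1,x,\dots,x^{q-2}$, which indeed has degree $-(q-1)/q$.

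Finally I would do the degree bookkeeping. If $\Phi_e(\alpha\beta)=1\in R_0$, then comparing degrees shows that the relevant homogeneous component of $\beta$ has $R$-degree $-\deg\alpha-a(q-1)$; since $R$ is $\mathbb{N}$-graded this degree must be nonnegative, and hence $\deg\alpha\le-a(q-1)$, as claimed. The one genuinely delicate point, which I expect to be the main obstacle, is the shift computation in the second paragraph: one must correctly combine the canonical shift $a$ coming from $\omega_R\cong R(a)$ with the $1/q$ rescaling introduced by $F^e$, and a sign or scaling slip there changes the stated bound. Everything else is formal once the generator $\Phi_e$ and its degree have been correctly identified.
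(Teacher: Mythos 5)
Your proof is correct, but it takes a genuinely different route from the paper's. The paper argues concretely: it picks a homogeneous system of parameters $f_1,\dots,f_d$, observes that a splitting of $\alpha F^e$ survives the base change to the Artinian Gorenstein quotient $R/(f_1^q,\dots,f_d^q)$, and deduces $\alpha z^q\ne 0$ for the socle generator $z$ of $R/(f_1,\dots,f_d)$; the bound then falls out of comparing $\deg(\alpha z^q)$ with the top nonvanishing degree $a+q(b_1+\cdots+b_d)$ of the quotient. You instead work at the level of $\mathrm{Hom}_R(F^e_*R,R)$, identify it as a rank-one free graded $F^e_*R$-module via $\omega_R\cong R(a)$ and duality for the finite Frobenius, pin down the degree $a(q-1)/q$ of the trace generator $\Phi_e$, and read the bound off from $\Phi_e(\alpha\beta)=1$ and the nonnegativity of the grading. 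Your shift computation is right (the $k[x]$ sanity check is the correct one), and the remaining points you leave implicit --- that the duality isomorphisms are graded, and that $\beta$ may be replaced by its homogeneous component of degree $-\deg\alpha-a(q-1)$ since $\Phi_e$ is homogeneous --- are routine. The trade-off: the paper's argument is more elementary, needing only the behavior of the $a$-invariant under Artinian reduction and the fact that a free summand must hit the socle, while yours invokes heavier machinery but makes the mechanism transparent and gives slightly more, e.g.\ that a splitting in the extremal degree $-a(q-1)$ forces $\beta$ to be a unit of degree zero, which is exactly the phenomenon exploited in Theorem \ref{Thm5.3} when $a=0$.
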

\begin{proof}
Let $f_1,f_2,\ldots, f_d$ be a homogeneous system of parameter $R$, and let $b_i=\deg f_i$. Because $\alpha F^e$ is split mono, by tensoring $R^q/(f_1^q,f_2^q,\ldots, f_d^q)$ over $R^q$, we see that $\alpha R^q/(f_1^q,f_2^q,\ldots, f_d^q)$ is a free direct summand of 
$R/(f_1^q,f_2^q,\ldots, f_d^q)$.

If we take a generator $z$ of $\mathrm{soc}(R/(f_1,f_2,\ldots, f_d))$, then it satisfies
\[ \deg z=a+b_1+b_2+\cdots+b_d . \]
Because $\alpha R^q/(f_1^q,f_2^q,\ldots, f_d^q)$ is a free direct summand of $R/(f_1^q,f_2^q,\ldots, f_d^q)$, we have $\alpha z^q\ne 0$
in $R/(f_1^q,f_2^q,\ldots, f_d^q)$, and thus
\[ \deg\alpha z^q\le a(R/(f_1^q,f_2^q,\ldots, f_d^q)). \]
Because $a(R/(f_1^q,f_2^q,\ldots, f_d^q))=a+q(b_1+b_2+\cdots+b_d)$, we obtain
\[ \deg\alpha+q(a+b_1+b_2+\cdots+b_d)\le a+q(b_1+b_2+\cdots+b_d). \]
Thus Lemma \ref{Lem5.1} follows.
\end{proof}

\begin{thm}\label{Thm5.3}
Let $R$ be a reduced graded $F$-finite Gorenstein ring with an isolated singularity, and let us assume that $R_0=k$ is a perfect field.
The following are equivalent.
\begin{enumerate}
\item $R^{(e)}$ has only one free direct summand for each positive integer $e$.
\item $R$ is $F$-pure and $a(R)=0$.
\end{enumerate}
\end{thm}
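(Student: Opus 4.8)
The plan is to establish the two implications separately, relying throughout on the fact that $R$ is $F$-pure precisely when $a_q \ge 1$ for every $q$: a free direct summand of $R^{(e)}$ is the same datum as a homogeneous $\alpha$ for which $\alpha F^e$ is split mono, and $F$-purity is the special case $\alpha = 1$. Consequently the $F$-purity half of (2) is immediate in both directions --- it follows from $a_q = 1 \ge 1$ under (1), and is assumed under (2) --- so the real work is to relate $a(R)$ to the free rank.

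For (2) $\Rightarrow$ (1), I would first use $F$-purity to get $a_q \ge 1$, reducing the claim to $a_q \le 1$. Equip $R^{(e)}$ with its natural $\tfrac1q\mathbb{Z}$-grading, in which the element corresponding to a homogeneous $x \in R$ has degree $(\deg x)/q$; the decomposition $R^{(e)} = R^{a_q} \oplus M_q$ may then be taken graded. Each free summand is some $R(-j)$ whose generator corresponds to a homogeneous $\alpha$ with $\deg\alpha = qj$, and splitting it off is exactly the condition that $\alpha F^e$ be split mono. Lemma \ref{Lem5.1} together with $a(R) = 0$ gives $\deg\alpha \le -a(R)(q-1) = 0$, hence $\deg\alpha = 0$ and $j = 0$: every free summand is generated in degree $0$. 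These generators descend to $a_q$ linearly independent vectors of $(R^{(e)})_0$, which corresponds to $R_0 = k$ and, because $k$ is perfect, is one-dimensional over $k$. Therefore $a_q \le 1$, and so $a_q = 1$.

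For (1) $\Rightarrow$ (2), $F$-purity is immediate from $a_q = 1 \ge 1$. Since $R$ is then $F$-pure, Cohen-Macaulay and positively graded, the cited Proposition 2.2 gives $a(R) \le 0$, so it remains to rule out $a(R) < 0$. I would argue by contradiction: if $a(R) < 0$, then $R$ is $F$-pure (hence $F$-injective) with an isolated singularity, so the cited Proposition 2.3 shows $R$ is $F$-rational; as $R$ is Gorenstein, the Huneke-Leuschke theorem that $s(R) > 0$ is equivalent to $F$-rationality then yields $s(R) > 0$. But $s(R) = \lim_{q} a_q/q^d > 0$ forces $a_q \to \infty$, contradicting $a_q = 1$ for all $e$. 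Hence $a(R) = 0$, completing (2).

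The step I expect to be the main obstacle is the bookkeeping in (2) $\Rightarrow$ (1): reducing to a graded decomposition, matching graded free summands with homogeneous splitting elements $\alpha$ so that Lemma \ref{Lem5.1} applies, and confirming that the degree shifts $j$ all vanish and that $\dim_k (R^{(e)})_0 = 1$ via perfectness of $k$. Once these are in place the uniqueness of the free summand is forced; the reverse implication is comparatively short, being a direct chain through the cited propositions and the characterization of $s(R) > 0$.
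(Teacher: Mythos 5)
Your proof is correct and follows essentially the same route as the paper: Lemma \ref{Lem5.1} forces every splitting element into degree $0$ when $a(R)=0$ (the direction the paper calls ``obvious''), and the converse runs through $F$-purity via the split surjection $R^{(e)}\to R$, then $a(R)\le 0$, then the contradiction between $s(R)>0$ and $a_q=1$ exactly as in the paper. The only difference is that you spell out the graded bookkeeping for $(2)\Rightarrow(1)$ that the paper leaves implicit.
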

\begin{proof}
Because the implication from {\rm (2)} to {\rm (1)} is obvious by Lemma \ref{Lem5.1}, it suffices to show the converse.

Suppose we have $a_q=1$ for some 
$e\gg 1$. This means there exists an $\alpha$ such that $\alpha R$
 is a free direct summand of $R^{(e)}$. $\alpha$ is not contained in $R^0$. Therefore, $R$ is $F$-pure(see \cite[page 128,remark(c)]{HH}), which implies that $a(R)\le 0$. If we suppose that $a(R)<0$, then $R$ becomes $F$-rational and $s(R)>0$, contradicting $a_q=1$.
\end{proof}

\begin{thm}\label{Thm5.4}
For any reduced graded $F$-finite Gorenstein ring $R$ and if we assume that $R_0=k$ is a perfect field, we have the following inequality.
\begin{equation}
s(R)\le\frac{(-a)^de^{\prime}}{2^{d-1}d!}.
\label{EqThm5.4}
\end{equation}
Here, $d$ is the dimension of $R$, $a$ is the $a$-invariant, and $e^{\prime}=\displaystyle{\lim}_{t\rightarrow 1}(1-t)^dP(R,t)$, where $P(R,t)$ is the Poincar\'{e} series of $R$.
\end{thm}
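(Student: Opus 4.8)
The plan is to bound the free rank $a_q$ of $R^{(e)}$ directly and then pass to the limit. Fix a homogeneous system of parameters $f_1,\dots,f_d$ with $b_i=\deg f_i$, and set $A=R/(f_1,\dots,f_d)$ and $A_q=R/(f_1^q,\dots,f_d^q)$; both are graded Artinian Gorenstein, with socle (top) degrees $a+\sum b_i$ and $a+q\sum b_i$ respectively. Write $R^{(e)}=R^{a_q}\oplus M_q$ and let $\alpha_1,\dots,\alpha_{a_q}$ be homogeneous generators of the free part, with $d_i=\deg\alpha_i$. By Lemma \ref{Lem5.1} we have $0\le d_i\le D$, where $D:=-a(q-1)$. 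Tensoring the splitting $\bigoplus_i R(-d_i/q)\hookrightarrow R^{(e)}\twoheadrightarrow\bigoplus_i R(-d_i/q)$ with $A$ over $R$ turns $R^{(e)}$ into $R^{(e)}\otimes_R A=R/(f_1^q,\dots,f_d^q)=A_q$, regarded as a graded $A$-module via $F^e$, and (being the image of a split inclusion) preserves the graded direct-summand structure. Thus $a_q$ equals the number of free $A$-summands of $A_q$, and writing $n_m$ for the number of the $\alpha_i$ with $d_i=m$, we have $a_q=\sum_{m=0}^{D}n_m$.

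The key is to bound $n_m$ in two complementary ways. First, the generators of a graded direct summand remain linearly independent modulo the maximal ideal, degree by degree; since $A_q/\mathfrak m_A A_q\cong R/\mathfrak m^{[q]}$ (where $\mathfrak m^{[q]}=(x^q:x\in\mathfrak m)$) with the grading rescaled by $q$, this gives $n_m\le\dim_k(R/\mathfrak m^{[q]})_m\le\dim_k R_m$. Second --- and this is where the Gorenstein hypothesis enters --- the socles of the summands are linearly independent inside the socle of $A_q$. A free summand generated in degree $m$ has its socle in degree $m+q(a+\sum b_i)$, and graded Matlis duality on the Gorenstein Artinian ring $A_q$ identifies its socle in degree $n$ with $(R/\mathfrak m^{[q]})$ in degree $(a+q\sum b_i)-n$. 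A short computation turns the socle constraint into $n_m\le\dim_k(R/\mathfrak m^{[q]})_{D-m}\le\dim_k R_{D-m}$. Combining the two, $n_m\le\min(\dim_k R_m,\ \dim_k R_{D-m})$.

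Summing and passing to the limit then produces the constant. Since $P(R,t)\sim e'(1-t)^{-d}$ as $t\to1$, the Hilbert function satisfies $\dim_k R_m\sim e'\,m^{d-1}/(d-1)!$, so
\[ a_q\le\sum_{m=0}^{D}\min\!\big(\dim_k R_m,\dim_k R_{D-m}\big)\sim\frac{e'}{(d-1)!}\sum_{m=0}^{D}\min\!\big(m^{d-1},(D-m)^{d-1}\big). \]
The minimum is symmetric about $m=D/2$, so the sum is asymptotic to $\dfrac{2e'}{(d-1)!}\displaystyle\int_0^{D/2}x^{d-1}\,dx=\dfrac{e'D^d}{2^{d-1}d!}$. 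With $D=-a(q-1)\sim(-a)q$ this gives $a_q\lesssim\dfrac{(-a)^d e'}{2^{d-1}d!}\,q^d$, and dividing by $q^d$ and letting $q\to\infty$ yields the asserted inequality $(\ref{EqThm5.4})$.

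I expect the main obstacle to be the socle-side inequality $n_m\le\dim_k R_{D-m}$: proving it cleanly requires the graded Gorenstein/Matlis duality on $A_q$ together with careful bookkeeping of the degree shift under $F^e$, and it is precisely this ``second half'' of the bound that supplies the factor $2^{1-d}$ improving the naive estimate $(-a)^de'/d!$. Two further technical points to settle are the legitimacy of the Artinian reduction (only the easy direction, that an $R$-splitting survives $-\otimes_R A$, is actually used) and the interchange of limit and summation, where the subleading terms of the Hilbert function must be shown to contribute only $O(q^{d-1})$ and hence vanish after dividing by $q^d$. Finally one should assume $a=a(R)<0$ throughout; the degenerate case $a\ge 0$ is handled separately, since there $R$ is not $F$-rational and $s(R)=0$ by Theorem \ref{Thm5.3}.
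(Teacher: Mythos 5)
Your proposal is correct and follows essentially the same route as the paper: reduce modulo a homogeneous system of parameters to an Artinian Gorenstein quotient, use Lemma \ref{Lem5.1} to confine the degrees of the free generators to $[0,-a(q-1)]$, bound their number from below-degree by linear independence of the $\alpha_i$ and from above-degree by linear independence of their socle elements combined with Gorenstein duality, and then extract the asymptotics of the Hilbert function. The only cosmetic difference is that you take a degree-by-degree minimum where the paper splits the generators into two groups at degree $-a(q-1)/2$; both yield the same bound $\tfrac{(-a)^d e'}{2^{d-1}d!}q^d + O(q^{d-1})$.
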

\begin{proof}
Let $\alpha_1,\alpha_2,\ldots,\alpha_{a_q}$ be a basis of the free 
direct summand of $R^{(e)}$. 
We assume that each $\alpha_i$ is a homogeneous element. 
Let $f_1,f_2,\ldots,f_d$ be homogeneous parameter systems of $R$, and let $b_i=\deg f_i$. 
We may assume that each $b_i$ is large enough with respect to $a_q$ and $q$.
Take a generator $z$ of $\mathrm{soc}(R/(f_1,f_2,\ldots, f_d))$.
By Lemma \ref{Lem5.1}, we have $\deg\alpha_i\le-a(q-1)$.
Because $\alpha_1R^q\oplus\alpha_2R^q\oplus\cdots\oplus\alpha_{a_q}R^q$ is a free direct summand of $R^{(e)}$,
\[ \alpha_1R^q/(f_1,f_2\ldots,f_d)\oplus\alpha_1R^q/(f_1,f_2\ldots,f_d)\oplus\cdots\oplus\alpha_{a_q}R^q/(f_1,f_2\ldots,f_d) \]
becomes a free direct summand of $R^{(e)}/(f_1,f_2\ldots,f_d)R^{(e)}=R/(f_1^q,\ldots, f_d^q)R$.

Define $a_q^-$ and $a_q^+$ by
\begin{eqnarray*}
a_q^-&=&\{ i\mid \deg\alpha_i<\frac{-a(q-1)}{2} \},\\
a_q^+&=&\{ i\mid \deg\alpha_i\ge\frac{-a(q-1)}{2} \}.
\end{eqnarray*}
And let $r_n$=dim$_k(R/(f_1^q,\ldots, f_d^q)R)_n$. Because $\alpha_i$'s are linearly independent over $k$, we have
\[ a_q^-\le\sum_{n=0}^{\frac{-a(q-1)}{2}-1}r_n. \]
Similarly, because $\alpha_iz^q$ 's are linearly independent over $k$, we have
\[ a_q^+\le\sum_{n=\frac{-a(q-1)}{2}+q(a+b_1+\cdots+b_d)}^{-a(q-1)+q(a+b_1+\cdots +b_d)}r_n. \]
By the duality of the Gorenstein ring $R^{(e)}/(f_1,\ldots, f_d)R^{(e)}=R/(f_1^q,\ldots, f_d^q)R$, we obtain
\[ r_{\frac{-a(q-1)}{2}+q(a+b_1+\cdots +b_d)+i}=r_{\frac{-a(q-1)}{2}-i}\]
and thus
\[ a_q^+\le\sum_{n=0}^{\frac{-a(q-1)}{2}}r_n. \]
Thus we obtain
\[ a_q=a_q^++a_q^-\le2\sum_{n=0}^{\frac{-a(q-1)}{2}}r_n. \]
Dividing his by $q^d$ and taking the limit, we obtain s(R) from the left-hand side.
For large $q$,
\[ 
2\sum_{n=0}^{\frac{-a(q-1)}{2}}r_n=2\frac{e^{\prime}}{d!}(\frac{-a(q-1)}{2})^d+(terms of lower degree)=\frac{(-a)^de^{\prime}}{2^{d-1}d!}q^d+(terms of lower degree). 
\]
Hence the result follows.
\end{proof}

\begin{ex}
If $R=k[X_1,\ldots, X_d]$ is a polynomial ring over perfect field $k$, because 
$a(R)=-d$ and $e^{\prime}=1$, we have
\[ \frac{(-a)^de^{\prime}}{2^{(d-1)d!}}=\frac{d^d}{2^{(d-1)}d!}. \]
The right hand side is equal to $1$ for $d=1,2$ and greater than $1$ for $d\ge 3$. Thus we have equality in {\rm (\ref{EqThm5.4})} if and only 
if $d=1,2$. 
 \end{ex}

\begin{ex}
$(\ref{EqThm5.4})$ becomes an equality if $R$ is a 2-dimensional rational double point.
\end{ex}
To confirm this, we introduce the following lemma.
\begin{lem}\label{Lem5.7}
Let $R$ be a graded ring, and let $x$ be a regular homogeneous element in degree $b$. Then
\begin{equation}
(1-t^b)P(R,t)=P(R/(x),t)
\label{EqLemma5.7}
\end{equation}
\end{lem}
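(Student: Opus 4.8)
The plan is to extract the identity from the short exact sequence of graded modules induced by multiplication by $x$. Since $x$ is a regular homogeneous element of degree $b$, the map ``multiply by $x$'' is an injective $R$-module homomorphism $R \to R$ that raises degrees by exactly $b$. Writing $R(-b)$ for $R$ with its grading shifted so that $R(-b)_n = R_{n-b}$, this map becomes a degree-preserving injection $R(-b) \hookrightarrow R$ whose cokernel is $R/(x)$. Thus I would begin by recording the graded short exact sequence
\[ 0 \longrightarrow R(-b) \xrightarrow{\;\cdot x\;} R \longrightarrow R/(x) \longrightarrow 0. \]

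Next I would pass to individual graded pieces. For each integer $n$, restricting the sequence to degree $n$ gives a short exact sequence of $k$-vector spaces
\[ 0 \longrightarrow R_{n-b} \longrightarrow R_n \longrightarrow (R/(x))_n \longrightarrow 0, \]
where I use that $R_0 = k$ is a field so that each $R_n$ is a finite-dimensional vector space. By additivity of dimension over short exact sequences, this yields $\dim_k (R/(x))_n = \dim_k R_n - \dim_k R_{n-b}$ for every $n$.

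Finally I would assemble these identities into the generating functions. Multiplying the degree-$n$ relation by $t^n$ and summing over all $n$ gives $P(R/(x),t) = \sum_n \dim_k R_n \, t^n - \sum_n \dim_k R_{n-b}\, t^n$. The first sum is exactly $P(R,t)$, and in the second sum I would factor out $t^b$ and reindex to recognize $\sum_n \dim_k R_{n-b}\, t^{n-b} = P(R,t)$ as well, so the second term equals $t^b P(R,t)$. Subtracting gives $P(R/(x),t) = (1-t^b)P(R,t)$, which is the claim.

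There is no genuinely hard step here: the only point requiring any care is the bookkeeping of the degree shift, namely that regularity of $x$ is precisely what makes the leftmost map injective and that its cokernel in degree $n$ is $(R/(x))_n$. Once the grading of the shifted copy $R(-b)$ is tracked correctly, the rest is a formal manipulation of Hilbert series.
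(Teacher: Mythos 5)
Your proposal is correct and follows exactly the same route as the paper: the graded short exact sequence $0\to R(-b)\xrightarrow{\cdot x}R\to R/(x)\to 0$, additivity of $k$-dimension in each degree, and summation into the Hilbert series with the $t^b$ reindexing. Nothing further is needed.
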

\begin{proof}
Since $x$ is a homogeneous regular element of degree $b$, 
\begin{equation}
0\rightarrow R(-b)\overset{x}{\longrightarrow}R\rightarrow R/(x)\rightarrow 0
\label{SeqLem5.7}
\end{equation}
is exact. By the additivity of the dimension of $k$-vector spaces, we obtain
\[ H(R/(x),n)=H(R,n)-H(R,n-b), \]
and thus
\begin{eqnarray*}
H(R/(x),n)t^n&=&H(R,n)t^n-H(R,n-b)t^n,\\
H(R/(x),n)t^n&=&H(R,n)t^n-t^bH(R,n-b)t^{n-b}.
\end{eqnarray*}
Here, $H(R,n)$ denotes the $k$-dimension of the degree n part of $R$.
Taking the sum of these equations, we obtain $(\ref{EqLemma5.7})$.
\end{proof}

\begin{lem} \label{Lem5.8}
Let $R$ be a graded Cohen-Macaulay ring, and let $x$ be a homogeneous regular element of degree $b$. Then we have
\[ a(R/(x))=a(R)+b. \]
\end{lem}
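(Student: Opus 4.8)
The plan is to read off the $a$-invariant from graded local cohomology. Recall that for a Cohen--Macaulay graded ring of dimension $d$, $a(R)$ is the top nonvanishing degree of $H^d_{\mathfrak{m}}(R)$. Since $x$ is a homogeneous regular element, $R/(x)$ is again Cohen--Macaulay, now of dimension $d-1$, so $a(R/(x))$ is the top nonvanishing degree of $H^{d-1}_{\mathfrak{m}}(R/(x))$, and the whole problem reduces to locating that degree.

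First I would take the graded short exact sequence
\[ 0 \to R(-b) \xrightarrow{x} R \to R/(x) \to 0, \]
which is precisely the sequence $(\ref{SeqLem5.7})$ already used in Lemma~\ref{Lem5.7}, and apply the graded local cohomology functors $H^i_{\mathfrak{m}}(-)$, remembering that they commute with the shift so that $H^d_{\mathfrak{m}}(R(-b)) = H^d_{\mathfrak{m}}(R)(-b)$. Using that $R$ is Cohen--Macaulay (so $H^i_{\mathfrak{m}}(R) = 0$ for $i < d$) and that $\dim R/(x) = d-1$ (so $H^d_{\mathfrak{m}}(R/(x)) = 0$), the long exact sequence collapses to
\[ 0 \to H^{d-1}_{\mathfrak{m}}(R/(x)) \to H^d_{\mathfrak{m}}(R)(-b) \xrightarrow{x} H^d_{\mathfrak{m}}(R) \to 0, \]
identifying $H^{d-1}_{\mathfrak{m}}(R/(x))$ with the kernel of multiplication by $x$ on $H^d_{\mathfrak{m}}(R)(-b)$.

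The key step is a degree-by-degree analysis of this kernel. Writing $a = a(R)$, by definition $[H^d_{\mathfrak{m}}(R)]_a \neq 0$ while $[H^d_{\mathfrak{m}}(R)]_n = 0$ for $n > a$. In degree $a+b$ the map $x$ carries $[H^d_{\mathfrak{m}}(R)(-b)]_{a+b} = [H^d_{\mathfrak{m}}(R)]_a \neq 0$ into $[H^d_{\mathfrak{m}}(R)]_{a+b} = 0$ (here $b>0$ forces the target to vanish), so this entire nonzero piece lies in the kernel; meanwhile for $n > a+b$ the source $[H^d_{\mathfrak{m}}(R)]_{n-b}$ already vanishes. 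Hence the top nonvanishing degree of $H^{d-1}_{\mathfrak{m}}(R/(x))$ is exactly $a+b$, which is the assertion $a(R/(x)) = a(R) + b$.

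I expect the only real obstacle to be the bookkeeping with the shift $(-b)$ and, in particular, confirming that the kernel is nonzero \emph{precisely} in degree $a+b$ rather than merely bounded above by it; conceptually the surjectivity of $x$ above degree $a$ does all the work. As an independent cross-check one could instead combine Lemma~\ref{Lem5.7} with the standard fact (valid for Cohen--Macaulay graded rings) that $a(R)$ equals the degree of $P(R,t)$ as a rational function: then $P(R/(x),t) = (1-t^b)P(R,t)$ yields $a(R/(x)) = b + a(R)$ by simply adding degrees.
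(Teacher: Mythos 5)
Your argument is correct. The paper does not actually prove this lemma; it simply cites \cite{G-W1} (2.2.10) and \cite{B-H} (3.6.14), and the local cohomology argument you give is precisely the standard proof found in those references: the long exact sequence for $0\to R(-b)\xrightarrow{x}R\to R/(x)\to 0$ collapses (by Cohen--Macaulayness of $R$ and $\dim R/(x)=d-1$) to $0\to H^{d-1}_{\mathfrak{m}}(R/(x))\to H^{d}_{\mathfrak{m}}(R)(-b)\xrightarrow{x}H^{d}_{\mathfrak{m}}(R)\to 0$, and your degree-by-degree analysis of the kernel correctly pins the top nonvanishing degree at $a(R)+b$ (using that $b>0$, which holds since a degree-zero regular element would be a unit in $R_0=k$). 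Your alternative cross-check via $\deg P(R/(x),t)=\deg(1-t^b)+\deg P(R,t)$ is also valid for Cohen--Macaulay graded rings and meshes nicely with Lemma \ref{Lem5.7}.
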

\begin{proof} See \cite{G-W1}(2.2.10) or \cite{B-H}(3.6.14).
\end{proof}

In the following, we demonstrate how to calculate the right-hand side of $(\ref{EqThm5.4})$ in Theorem \ref{Thm5.4} for singularities of type $A_n$ by using these lemmas. In this case, $R=k[x,y,z]/x^2+y^2+z^{n+1}$. We remark that because $R$ is a hypersurface singularity, it is a complete intersection, in particular Cohen-Macaulay.

$R$ can be regarded as a graded ring with $\deg x=\deg y=n+1, \deg z=2$. With this grading, $x^2+y^2+z^{n+1}$ is a homogeneous element of degree $2(n+1)$, and thus we have
\[ a(R)=-(n+1+n+1+2)+2(n+1)=-2 \]
by Lemma \ref{Lem5.8}.

Next we calculate the Poincar\'{e} series. By Lemma \ref{Lem5.7}, we have
\[ (1-t^{n+1})P(R,t)=P(R/(x),t). \]
Moreover, because $y$ is a regular element in $R/(x)$ of degree $n+1$, we have
\[ (1-t^{n+1})P(R/(y),t)=P(R/(x,y),t) \]
again by Lemma \ref{Lem5.7}.
Since $z$ is an element of degree $2$ in $R/(x,y)=k[x,y,z]/z^{n+1}$, we have
\[ 
P(R/(x,y),t)=1+t^2+\cdots+t^{2n}=\frac{1-t^{2(n+1)}}{1-t^2}. 
\] 
Combining with the above two equations, we obtain
\[ P(R,t)=\frac{1-t^{2(n+1)}}{(1-t^{n+1})(1-t^{n+1})(1-t^2)}, \]
and thus $e^{\prime}=\frac{1}{n+1}$. The right-hand side of {\rm (\ref{EqThm5.4})} can be calculated to be $\frac{1}{n+1}$, and thus is equal to $e^{\prime}$ in this case.

The following is the list of $a$ and $e^{\prime}$ calculated by the above method. 
\begin{center}
\begin{tabular}
[c]{cccc}
type    & $e^{\prime}$    & $a(R)$ & {\rm RHS} of {\rm (\ref{EqThm5.4})} \\\hline
$(A_n)$ & $\frac{1}{n+1}$ & $-2$   & $\frac{1}{n+1}$    \\
$(D_n)$ & $\frac{1}{n-1}$ & $-1$   & $\frac{1}{4(n-2)}$ \\
$(E_6)$ & $\frac{1}{6}$   & $-1$   & $\frac{1}{24}$     \\
$(E_7)$ & $\frac{1}{12}$  & $-1$   & $\frac{1}{48}$     \\
$(E_8)$ & $\frac{1}{30}$  & $-1$   & $\frac{1}{120}$
\end{tabular}
\end{center}

\section{Local case}
Lastly, we give a local version of theorem \ref{Thm5.3} .

\begin{thm}
Let $(R,\mathfrak{m})$ be a $F$-finite Gorenstein local ring that is $F$-rational on a punctured spectrum and assume that the residue field is perfect. The following are equivalent.
\begin{enumerate}
\item $R^{(e)}$ has only one free direct summand, for each positive integer $e$.
\item $R$ is $F$-pure and not $F$-rational
\end{enumerate}
\end{thm}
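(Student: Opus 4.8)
The plan is to prove the two implications separately, following the pattern of Theorem~\ref{Thm5.3} (and of Proposition~2.3), with the graded $a$-invariant replaced throughout by $F$-rationality and the hypothesis of an isolated singularity replaced by ``$F$-rational on the punctured spectrum''. Throughout, write $E=H^{d}_{\mathfrak{m}}(R)$; since $R$ is $F$-finite Gorenstein with perfect residue field, $E$ is the injective hull of $k$, its socle $(0:_{E}\mathfrak{m})$ is one-dimensional over $k$, and Frobenius induces a natural additive action $F^{e}\colon E\to E$, $\xi\mapsto\xi^{q}$.

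For $(1)\Rightarrow(2)$ I would argue exactly as in the easy direction of Theorem~\ref{Thm5.3}. If $a_{q}=1$ for every $e$, then in particular $a_{q}\ge 1$, so $R$ is a direct summand of $R^{(e)}$ for every $e$; as $R$ is $F$-finite this is precisely the statement that Frobenius splits, i.e. $R$ is $F$-pure. Moreover $s(R)=\lim_{q\to\infty}a_{q}/q^{d}=\lim_{q\to\infty}1/q^{d}=0$, so by the Huneke--Leuschke criterion that for a Gorenstein ring $s(R)>0$ is equivalent to $F$-rationality, $R$ is not $F$-rational. This is $(2)$.

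The substantial direction is $(2)\Rightarrow(1)$. Assume $R$ is $F$-pure and not $F$-rational; $F$-purity already gives $a_{q}\ge 1$, so everything reduces to the bound $a_{q}\le 1$, which is the local replacement for the degree estimate of Lemma~\ref{Lem5.1}. I first record the two inputs that parallel Proposition~2.3: because $R$ is not $F$-rational, $0$ is not tightly closed in $E$, so $0^{*}_{E}\ne 0$; and because $R$ is $F$-rational on the punctured spectrum, the (test-ideal) argument of Proposition~2.3 shows $\tau(R)$ is $\mathfrak{m}$-primary, and since $0^{*}_{E}$ is annihilated by $\tau(R)$ it has finite length. Also $F$-purity implies $F$-injectivity, so each $F^{e}\colon E\to E$ is injective and in particular $\eta^{q}=F^{e}(\eta)\ne 0$ for a socle generator $\eta$. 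Now let $\alpha_{1},\dots,\alpha_{a_{q}}$ generate the free direct summand of $R^{(e)}$, with retractions $\phi_{1},\dots,\phi_{a_{q}}$ satisfying $\phi_{i}(\alpha_{j})=\delta_{ij}$. Applying $H^{d}_{\mathfrak{m}}(-)$ to the split monomorphisms $\alpha_{i}F^{e}$ and their retractions and evaluating at $\eta$ yields $\phi_{i}^{*}(\alpha_{j}\eta^{q})=\delta_{ij}\eta$, so the elements $\alpha_{1}\eta^{q},\dots,\alpha_{a_{q}}\eta^{q}$ are $k$-linearly independent in $E$. The aim is then to show, exactly as ``$\deg\alpha\le -a(q-1)=0$ forces $\alpha\in R_{0}=k$'' did in the graded case, that these independent elements all lie in the one-dimensional socle $(0:_{E}\mathfrak{m})$; since $\dim_{k}(0:_{E}\mathfrak{m})=1$, this forces $a_{q}\le 1$.

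The main obstacle is precisely this confinement step, i.e. producing the local substitute for Lemma~\ref{Lem5.1}: in the graded case the $a$-invariant and the grading pin the relevant element to degree $0$, hence to the one-dimensional piece $R_{0}$, and under Matlis duality ``degree $0$'' corresponds to the socle of $E$, so one must show that ``$F$-pure and not $F$-rational'' forces $\alpha_{i}\eta^{q}\in(0:_{E}\mathfrak{m})$. I expect this to go through the dictionary between the free rank and the Frobenius action on $E$: writing $I_{e}=\{r\in R:\phi(r^{1/q})\in\mathfrak{m}\text{ for all }\phi\in\operatorname{Hom}_{R}(R^{(e)},R)\}$ (identifying $R^{(e)}$ with $R^{1/q}$), one has $a_{q}=\ell(R/I_{e})$, so that $a_{q}\le 1$ is equivalent to $I_{e}=\mathfrak{m}$. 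A second free summand would provide some $\phi$ and some nonunit $r$ with $\phi(r^{1/q})$ a unit; dualizing this $p^{-e}$-linear datum against $E$ should produce a nonzero element of $0^{*}_{E}$ lying outside the socle and surviving every Frobenius power, which contradicts the finite length of $0^{*}_{E}$ together with $F$-injectivity in the same manner as the contradiction reached at the end of Proposition~2.3. Once this is in place the remaining bookkeeping is routine: a homogeneous system of parameters is no longer available, so one works instead with a fixed system of parameters $f_{1},\dots,f_{d}$ and the socle of the Gorenstein Artinian quotient $R/(f_{1}^{q},\dots,f_{d}^{q})$, exactly as in the proof of Theorem~\ref{Thm5.4}.
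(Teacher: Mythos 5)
Your treatment of $(1)\Rightarrow(2)$ is fine and agrees with the paper, which simply says this direction is the same as in the graded case: $a_q\ge 1$ gives $F$-purity, and $s(R)=\lim 1/q^d=0$ rules out $F$-rationality by the Huneke--Leuschke criterion. The problem is the converse. You correctly isolate the crux --- reducing $a_q\le 1$ to showing $I_e=\mathfrak{m}$, where $a_q=\ell(R/I_e)$ --- but at exactly that point you write ``I expect this to go through'' and ``should produce a nonzero element of $0^*_E$ lying outside the socle,'' without an actual argument. That is a genuine gap, and the contradiction you hope to reach does not obviously exist as stated: an element of the finite-length module $0^*_E$ that survives every Frobenius power is not by itself contradictory (the socle generator $\eta$ is such an element, since $\eta\in 0^*_E$ when $R$ is not $F$-rational and $\eta^q\ne 0$ by $F$-injectivity). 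The graded proof of Proposition~2.3 derives its contradiction from degrees tending to $-\infty$, which has no direct local analogue, so ``the same manner as Proposition~2.3'' does not transplant.

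The paper closes this gap by citing the splitting prime of Aberbach--Enescu \cite{A-E2}: the ideal $\mathfrak{p}=\bigcap_e I_e$ is \emph{prime} when $R$ is $F$-pure, it contains the test ideal $\tau(R)$, and $\tau(R)$ is $\mathfrak{m}$-primary (by $F$-rationality on the punctured spectrum) and proper (since $R$ is Gorenstein and not $F$-rational). A prime containing a proper $\mathfrak{m}$-primary ideal must equal $\mathfrak{m}$, so $\mathfrak{p}=\mathfrak{m}$, forcing $I_e=\mathfrak{m}$ and hence $a_q=\ell(R/I_e)=1$ for every $e$ by \cite[Remark 3.5]{A-E2}. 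The primality of $\mathfrak{p}$ is the essential input you are missing: without it, the test-ideal argument you sketch only yields $R\eta^q\subseteq(0:_E\tau(R))$, i.e.\ $a_q\le\ell(R/\tau(R))$, a bound independent of $q$ that gives $s(R)=0$ but not $a_q=1$. To complete your proof you would need either to invoke the splitting prime as the paper does, or to reprove its primality in this setting.
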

\begin{proof}
The implication from {\rm (1)} to {\rm (2)} is the same as the graded case.
To show the converse, we use the splitting prime. For the definition and the behavior, see \cite{A-E2}.
Let $\mathfrak{p}$ be the splitting prime. Because $R$ is $F$-pure, $\mathfrak{p}$ is not unit ideal.
Because $\mathfrak{p}$ contains the test ideal, $\mathfrak{p}$ equals to $\mathfrak{m}$.(See \cite[remark3.5.]{A-E2}.)\\
But then $a_q=1$ by the remark again, namely $R^{(e)}$ has only one free direct summand, for each positive integer $e$.

\end{proof}

\section{Acknowledgement}
We would like to thank Anurag K. Singh and the refree for their useful advice.

\end{document}